\theoremstyle{plain}
  \newtheorem{theorem}{Theorem}
  \newtheorem{thm}[theorem]{Theorem}
  \newtheorem{lem}[theorem]{Lemma}
\theoremstyle{definition}
  \newtheorem{conjecture}[theorem]{Conjecture}
  \newtheorem{conj}[theorem]{Conjecture}
\theoremstyle{remark}
  \newtheorem{rem}[theorem]{Remark}
\DeclareMathAlphabet{\mathcal}{OMS}{cmsy}{m}{n}
\DeclareFontShape{OT1}{cmr}{m}{up}
      {<->ssub*cmr/m/n}{}
\newcommand{\C}{\mathbb{C}}
\newcommand{\G}{\mathbb{G}}
\newcommand{\N}{\mathbb{N}}
\newcommand{\Q}{\mathbb{Q}}
\newcommand{\R}{\mathbb{R}}
\newcommand{\Z}{\mathbb{Z}}
\newcommand{\calo}{\mathcal{O}}
\newcommand{\calu}{\mathcal{U}}
\newcommand{\mfa}{\mathfrak{a}}
\newcommand{\mfp}{\mathfrak{p}}
\newcommand{\mfs}{\mathfrak{S}}
\newcommand{\rmn}{\mathrm{N}}
\newcommand{\rmt}{\mathrm{T}}
\newcommand{\ab}{{\mathrm{ab}}}
\newcommand{\Ad}{{\mathrm{Ad}}}
\newcommand{\spec}{\mathrm{Spec}\,}
\newcommand{\Id}{{\mathrm{Id}}}
\renewcommand{\Im}{{\mathrm{Im}}}
\newcommand{\gal}{\mathrm{Gal}}
\newcommand{\Gal}{{\mathrm{Gal}}}
\newcommand{\Hom}{{\mathrm{Hom}}}
\newcommand{\Ind}{{\mathrm{Ind}}}
\newcommand{\Reg}{{\mathrm{Reg}}}
\newcommand{\GL}{{\mathrm{GL}}}
\newcommand{\et}{{\mathrm{\acute{e}t}}}
\newcommand{\fl}{{\mathrm{fl}}}
\newcommand{\Zar}{{\mathrm{Zar}}}
\newcommand{\Frob}{{\mathrm{Frob}}}
\newcommand{\ord}{{\mathrm{ord}}}
\newcommand{\Tr}{{\mathrm{Tr}}}
\newcommand{\cusp}{\mathrm{Cusp}}
\newcommand{\rs}{\mathrm{RS}}
\newcommand{\stark}{{\mathrm{Stark}}}
\newcommand{\Stark}{{\mathrm{Stark}}}
\newcommand{\SD}{{\mathrm{SD}}}
\newcommand{\lra}{{\, \longrightarrow \,}}
\newcommand{\iso}{\, \xrightarrow{\widesim{}} \,}
\newcommand{\paren}[1]{\mathopen{}\left(#1\right)\mathclose{}}
\newcommand{\set}[1]{\mathopen{}\left\{#1\right\}\mathclose{}}
\newcommand{\sbrac}[1]{\mathopen{}\left[#1\right]\mathclose{}}
\newcommand{\abrac}[1]{\mathopen{}\left\langle#1\right\rangle\mathclose{}}
\newcommand{\verts}[1]{\mathopen{}\left\lvert#1\right\rvert\mathclose{}}
\newcommand{\norm}[1]{\mathopen{}\left\lvert\left\lvert#1\right\rvert\right\rvert\mathclose{}}
\newcommand\restr[2]{{
  \left.\kern-\nulldelimiterspace 
  #1 
  \right|_{#2} 
  }}
\newcommand{\Mid}{\,\middle|\,}
\newcommand{\pair}[1]{\abrac{#1}}
\newcommand{\wh}{\widehat}
\newcommand{\widesim}[2][2]{
  \mathrel{\overset{#2}{\scalebox{#1}[1]{$\sim$}}}
}
\renewcommand{\setminus}{\backslash}
    \def\@thm#1#2#3{%
      \ifhmode
        \unskip\unskip\par
      \fi
      \normalfont
      \trivlist
      \let\thmheadnl\relax
      \let\thm@swap\@gobble
      \let\thm@indent\indent 
      \thm@headfont{\scshape}
      \thm@notefont{\fontseries\mddefault\upshape}%
      \thm@headpunct{.}
      \thm@headsep 5\p@ plus\p@ minus\p@\relax
      \thm@space@setup
      #1
      \@topsep \thm@preskip               
      \@topsepadd \thm@postskip           
      \def\dth@counter{#2}%
      \ifx\@empty\dth@counter
        \def\@tempa{%
          \@oparg{\@begintheorem{#3}{}}[]%
        }%
      \else
        \H@refstepcounter{#2}%
        \hyper@makecurrent{#2}%
        \let\Hy@dth@currentHref\@currentHref
        \AddToHookNext{para/begin}{\MakeLinkTarget*{\Hy@dth@currentHref}}%
        \def\@tempa{%
          \@oparg{\@begintheorem{#3}{\csname the#2\endcsname}}[]%
        }%
      \fi
      \@tempa
    }%
\@clubpenalty \everypar{}%
\title{Towards the $p$-adic derived Hecke algebra for weight one forms}
\author{Robin Zhang}
\address{Department of Mathematics, Massachusetts Institute of Technology}
\email{robinz@mit.edu}
\date{June 10, 2025}
\begin{document}

\begin{abstract}
  This note outlines an approach to defining $p$-adic Shimura classes and $p$-adic derived Hecke operators on the completed cohomology of modular curves from upcoming work by the author. After reviewing the modulo-$p$ constructions of Harris and Venkatesh, we formulate a conjecture relating the action of $p$-adic derived Hecke operators on cusp forms of weight $1$ and level $\Gamma_1(N)$ to the $p$-adic logarithm of the Stark unit for the corresponding adjoint Deligne--Serre representation. This new $p$-adic conjecture can be viewed as complementary to the Harris--Venkatesh conjecture.
\end{abstract}

\maketitle

\setcounter{tocdepth}{1}
\tableofcontents


\sloppy
\section{Introduction}

Let $f = \sum_n a_n q^n$ be a newform of weight $1$ and level $\Gamma_1(N)$,
and denote its field of coefficients by $\Q(f) \subset \C$
with ring of integers $\Z[f]$.
There is a finite Galois extension $E/\Q$
and an associated $3$-dimensional complex representation
$\Ad(\rho_f)$ of $\gal(E/\Q)$ obtained by taking the adjoint action
of the Deligne--Serre representation $\rho_f$ of $f$
on trace-free $2 \times 2$ complex matrices.
The trace-free adjoint representation has an associated dual space of units
$\calu(\Ad(\rho_f)) :=
\Hom_{\Gal(E/\Q)}(\Ad(\rho_f), \calo_E^\times \otimes \Z[f])$
with a $(\Z/p\Z)^\times$-regulator map
\[
  \Reg_{(\Z/p\Z)^\times} : \calu\big(\Ad(\rho_f)\big) \lra (\Z/p\Z)^\times \otimes \Z\sbrac{f, \frac{1}{6N}}
\]
for each prime $p \nmid 6N$.
Let $\cusp$ denote the cuspidal divisor
on the modular curve $X_1(N)$.
Using constructions (especially the Shimura covering/subgroup/class)
with origins in the works
of Shimura \cite{shimura-1963,shimura-1967}, Mazur \cite{mazur},
and Merel \cite{merel2} among others,
the study of derived Hecke operators on
the coherent cohomology of modular curves
\[
  T_{(\Z/p\Z)^\times, N}: H^0\big(X_1(N), \omega(-\cusp)\big) \otimes (\Z/p\Z)^\times
    \lra
    H^1\big(X_1(N), \omega(-\cusp)\big) \otimes (\Z/p\Z)^\times
\]
was initiated by Harris and Venkatesh \cite{hv},
who posed a conjecture relating the action of 
$T_{(\Z/p\Z)^\times, N}$
to the action of a group of units.
For the newform $f^* = \sum_n \overline{a_n} q^n$ obtained from $f$ by
complex conjugation of Fourier coefficients,
the Serre-duality pairing
$\langle f^*, T_{(\Z/p\Z)^\times, N}(f)\rangle_{\SD}$
(i.e. ``pseudo-eigenvalue'' of $T_{(\Z/p\Z)^\times, N}$)
was packaged as the Harris--Venkatesh ``norm''
$\norm{f}_{(\Z/p\Z)^\times}^2 \in (\Z/p\Z)^\times \otimes \Z[f]$
in \cite{zhang-hvs},
which drew an analogy with the Petersson norm
$\norm{f}_{\R}^2 = \int_{X_0(N)} \verts{f}^2 \frac{dxdy}{y} \in \R$
and the conjectures of Stark \cite{stark-1971,stark-1975,stark-1976,stark-1980}.
\begin{conjecture}[The Harris--Venkatesh conjecture]
  \label{conj:hv}
  Let $f$ be a newform of weight $1$ and level $\Gamma_1(N)$.
  There exists a $u \in \calu(\Ad(\rho_f)) \otimes \Q$
  and a prime $p_0$ such that for all primes $p \geq p_0$,
  \[
    \norm{f}_{(\Z/p\Z)^\times}^2 = \Reg_{(\Z/p\Z)^\times}(u)
  \]
  or equivalently,
  \[
    \abrac{f^*, T_{\Z_p, N}(f)}_{\SD} = \Reg_{(\Z/p\Z)^\times}(u).
  \]
\end{conjecture}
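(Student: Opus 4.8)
The plan is to reduce Conjecture \ref{conj:hv} to one explicit identification of cohomology classes and then to verify that identification in the cases where both sides admit a concrete description. On the automorphic side, $T_{(\Z/p\Z)^\times, N}$ is built from a \emph{Shimura class} $\kappa_p$ — a class arising from the $\Z/p$-Shimura covering of $X_1(N)$, equivalently from $p$-th roots of a Siegel-type modular unit whose divisor is supported on the cuspidal divisor $\cusp$, in the tradition of Shimura, Mazur, and Merel — and the pseudo-eigenvalue $\langle f^*, T_{(\Z/p\Z)^\times, N}(f)\rangle_{\SD}$ can be rewritten, via Serre duality, as the pairing of $f^*$ against $\kappa_p \cup f$. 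So the first step is to make $\kappa_p$ completely explicit in terms of this modular unit; this is exactly where rationality over $\Z[f,\tfrac{1}{6N}]$ is forced, and where the factor $\tfrac{1}{6N}$ appearing in the target of $\Reg_{(\Z/p\Z)^\times}$ enters (through the denominators of Eisenstein series and Siegel units).

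The second step is to move the resulting pairing into Galois cohomology over the splitting field $E$: after projecting to the $\Ad(\rho_f)$-isotypic part, $\langle f^*, T_{(\Z/p\Z)^\times, N}(f)\rangle_{\SD}$ becomes the pairing of a Kummer class of a unit of $\calo_E$ against a class built from $\kappa_p$, landing in $(\Z/p\Z)^\times \otimes \Z[f,\tfrac{1}{6N}]$. An explicit reciprocity law — Artin reciprocity over $E$, in the style of the reciprocity computations that match Eisenstein/Shimura classes with $p$-adic logarithms of units — should then identify this element with $\Reg_{(\Z/p\Z)^\times}(u)$ for a suitable $u \in \calu(\Ad(\rho_f))\otimes\Q$. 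Independence of $p$, so that a single $u$ works for every $p \geq p_0$, should follow from the fact that the modular unit is defined over $\Q$ together with the expectation that $\calu(\Ad(\rho_f))\otimes\Q$ has rank one: the class we produce then lies on a fixed $\Q$-line and is pinned down up to a $\Q^\times$-scalar by its family of regulators.

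In the dihedral case, where $f = \theta_\psi$ is induced from a ray-class character $\psi$ of an imaginary quadratic field $K$, all of this can be carried out unconditionally: $\Ad(\rho_f)$ contains the induction of $\psi/\psi^\sigma$, the relevant units of $\calo_E$ are elliptic units on a ring class field of $K$, and the comparison of the modular-unit computation with the elliptic-unit regulator is a Kronecker-limit-formula calculation of a type already present in the Stark/elliptic-units literature. I would therefore first establish the conjecture for all dihedral $f$; this also serves to fix the normalizations in $\Reg_{(\Z/p\Z)^\times}$ and to make the bound $p_0$ effective.

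The main obstacle is the non-dihedral (exotic $A_4$, $S_4$, $A_5$) case. There the existence of the Stark unit $u \in \calu(\Ad(\rho_f))\otimes\Q$ is itself essentially an instance of Stark's conjecture, and no construction of it is known, so the right-hand side of Conjecture \ref{conj:hv} is not available as a concrete object to match against the modular-unit computation on the left — which instead produces a family of elements of $(\Z/p\Z)^\times$ that cannot yet be recognized as reductions of one global regulator. Circumventing this seems to demand genuinely new input: either a $p$-adic or deformation-theoretic construction of the Stark unit — which is precisely what the $p$-adic derived Hecke program outlined in this note is meant to supply — or an a priori proof that the pseudo-eigenvalues interpolate $p$-adically to the logarithm of a global unit, which would in turn amount to proving Stark's conjecture for $\Ad(\rho_f)$.
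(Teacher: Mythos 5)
There is nothing in the paper to compare your argument against: the statement you were asked about is Conjecture \ref{conj:hv}, the Harris--Venkatesh conjecture itself. The paper does not prove it --- it states it, recalls that it is known only for imaginary dihedral forms and certain real dihedral forms (\cite{dhrv,lecouturier-hv,zhang-hv}), with numerical evidence for some exotic $f$ (\cite{marcil}), and then uses it as motivation for the new $p$-adic Conjecture \ref{conj:p-adic-hv}. So your text cannot be judged as a proof of the statement, and indeed it is not one: it is a program outline, and your own final paragraph concedes the decisive gap, namely that for non-dihedral (exotic $A_4$, $S_4$, $A_5$) forms the unit $u \in \calu(\Ad(\rho_f)) \otimes \Q$ whose regulator is supposed to match the pseudo-eigenvalues is not available by any known construction, so the proposed ``explicit reciprocity'' comparison has nothing global to compare against. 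Closing that gap is essentially equivalent to Stark's conjecture for $\Ad(\rho_f)$ plus the matching of reductions at infinitely many $p$, which is precisely why the statement remains a conjecture.

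Two further points on the sketch itself. First, your dihedral-case strategy (elliptic units, Kronecker-limit-type comparison after projecting to the $\Ad(\rho_f)$-isotypic part) is broadly the route taken in the cited literature, so that portion is reasonable as a summary of known results rather than new content; but even there the reduction from ``the two regulators agree for all $p \geq p_0$'' to ``a single $u$ on a rank-one $\Q$-line works'' needs the injectivity-type statement that an element of $\calu(\Ad(\rho_f)) \otimes \Q$ is determined by its family of $(\Z/p\Z)^\times$-regulators, which you assert but do not argue. Second, your description of the input class is off: the Shimura class used here comes from the covering $X_1(p) \to X_0(p)$ (the maximal \'etale intermediate cover) with deck group $(\Z/p\Z)^\times$, pulled back to level $\Gamma_0(p) \cap \Gamma_1(N)$ and paired via Serre duality with the trace of $f(z)f^*(pz)$; it is not a $\Z/p$-covering of $X_1(N)$, and identifying it with $p$-th roots of a Siegel unit is not the construction the paper (or Harris--Venkatesh) uses, so the ``first step'' as you state it would already need repair before any of the subsequent reductions could be carried out.
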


The Harris--Venkatesh conjecture has been proved
for imaginary dihedral forms in \cite{dhrv, lecouturier-hv, zhang-hv},
and for certain real dihedral forms in \cite{dhrv}.
Numerical evidence for some exotic $f$ has been given
in \cite{marcil}.
The Harris--Venkatesh conjecture
has been refined in \cite{zhang-hvs} to be compatible
with the Stark conjecture (i.e. $u$ can be taken to be the Stark unit $u_\Stark$)
and in \cite{zhang-rs} with explicit local data
(i.e. the minimal $p_0$).

This note outlines key constructions
and a conjecture from upcoming work of the author
on $p$-adic derived Hecke operators.
Section \ref{sec:stark-unit-group} briefly reviews
the definition of the Stark unit group
and the regulator maps defined on it.
Section \ref{sec:mod-p} reviews
the modulo-$p$ derived Hecke operator
and the Harris--Venkatesh conjecture.
Section \ref{sec:p-adic-forms}
gives the construction of an element $\wh{f}^*$
of the completed cohomology group
\[
  \wh{H}^0 \big(X_0(p^\infty), \omega(-\cusp)\big)
    = \varinjlim_n H^0 \big(X_0(p^n), \omega(-\cusp)\big),
\]
which gives rise to an element
$f \cdot \wh{f}^* \in \wh{H}^0(X_0(p^\infty), \Omega)$
that will be paired with the $p$-adic Shimura class
and serve as an analogue of the weight-$2$ cusp form
$\Tr_{\Gamma_0(p)}^{\Gamma_0(p) \cap \Gamma_1(N)}(f(z) f^*(pz))$
in \cite[Section 1.2]{dhrv}.
Section \ref{sec:p-adic-Shimura} gives the construction of
$p$-adic Shimura classes 
\begin{align*}
  \mfs_{\Z_p^\times}
    &\in \Hom_{\Z_p^\times}(\wh{H}^0 (X_0(p^\infty), \Omega), \Z_p^\times) \\
  \mfs_{\Z_p}
    &\in \Hom_{\Z_p}(\wh{H}^0 (X_0(p^\infty), \Omega), \Z_p)
\end{align*}
via projective limits of flat cohomology classes
obtained from modular curve coverings
\[
  \pi_n: X_1 \paren{p^n} \lra X_0\paren{p^n}.
\]
Since $\mfs_{\Z_p^\times} = \mfs_{(\Z/p\Z)^\times} \times \mfs_{\Z_p}$,
we choose to focus on the $\Z_p$-component
that is complementary to the Harris--Venkatesh setting.
Serre duality is applied to $f \cdot \wh{f}^*$
and the Shimura class $\mfs_{\Z_p}$
to define the $\Z_p$ derived Hecke operator
\[
  T_{\Z_p, N}: H^0\big(X_1(N), \omega(-\cusp)\big)^\ord
    \lra H^1\big(X_1(N), \omega(-\cusp)\big)^\ord,
\]
assuming ordinariness when $p = 2$ or $3$,
i.e. $\ord_v (a_p) = 0$ for a good prime $v$ over $p$;
this gives a a $p$-adic ``norm'' 
$\norm{f}_{\Z_p}^2 \in \Z_p \otimes \calo_v$
where $\calo_v$ is the completion of $\Z[f]$ at a finite prime $v$ over $p$.
Inspired by the Harris--Venkatesh conjecture,
we propose the following
$p$-adic conjecture
in terms of the
$p$-adic regulator $\Reg_{\Z_p}$,
which is the $p$-adic logarithm of a unit
after evaluation at a distinguished element
$w_v \in \Ad(\rho_f)$
(cf. the relationship between $\Reg_\R$
and the classical logarithm in \cite{tate-stark,zhang-hvs}).
\begin{conj}[$p$-adic conjecture]
  \label{conj:p-adic-hv}
  Let $f$ be a newform of weight $1$ and level $\Gamma_1(N)$.
  There exists an element $u \in \calu(\Ad(\rho_f)) \otimes \Q$
  and a prime $p_0$ such that for all primes $p \geq p_0$,
  \[
    \norm{f}_{\Z_p}^2 = \Reg_{\Z_p}(u),
  \]
  or equivalently,
  \[
    \abrac{f^*, T_{\Z_p, N}(f)} = \Reg_{\Z_p}(u).
  \]
\end{conj}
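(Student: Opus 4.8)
The plan is to follow the template that has proved successful for the Harris--Venkatesh conjecture: establish Conjecture \ref{conj:p-adic-hv} first for dihedral $f$, where $\Ad(\rho_f)$ decomposes into characters induced from a quadratic field and the Stark unit group $\calu(\Ad(\rho_f))$ admits an explicit description, and leave the exotic case open, as it currently is for Conjecture \ref{conj:hv}. Concretely, for $f = \theta_\psi$ the theta series attached to a ray class character $\psi$ of a quadratic field $K$, one has $\Ad(\rho_f) \cong \epsilon_K \oplus \Ind_K^\Q(\psi/\psi^c)$, so that $u_\Stark$ is built from elliptic units when $K$ is imaginary and from Gross--Stark units when $K$ is real, and $\Reg_{\Z_p}(u_\Stark)$ becomes the $p$-adic logarithm of an explicit global unit evaluated at the distinguished element $w_v \in \Ad(\rho_f)$.

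The core step is to evaluate the left-hand side $\norm{f}_{\Z_p}^2 = \abrac{f^*, T_{\Z_p, N}(f)}$ in the same terms. First I would use the interpolation property of the completed-cohomology class $\wh f^* \in \wh H^0(X_0(p^\infty), \omega(-\cusp))$ constructed in Section \ref{sec:p-adic-forms} to rewrite the Serre-duality pairing of $f \cdot \wh f^*$ against $\mfs_{\Z_p}$ as a $p$-adic limit over $n$ of the finite-level pairings of $\Tr(f \cdot f^*)$ with the flat cohomology classes $\mfs^{(n)}$, which are precisely the quantities entering Conjecture \ref{conj:hv} and \cite{dhrv, hv}. Because each class in the system defining $\mfs_{\Z_p}$ arises, via the Shimura covering $\pi_n : X_1(p^n) \to X_0(p^n)$ and Kummer theory, from an explicit modular unit, this limit can be identified with the image under a Perrin-Riou-type big logarithm map of a norm-compatible system of the relevant global units along the ray class tower cut out by $K$. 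The ordinariness hypothesis $\ord_v(a_p) = 0$ is what makes this projective limit converge and isolates the correct unit eigenspace, which is why it is needed only for the small primes $p = 2, 3$.

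It then remains to match the two explicit expressions. In the imaginary dihedral case this should follow from Rubin's explicit reciprocity law for elliptic units together with the factorization of the relevant Katz $p$-adic $L$-function, paralleling the way the archimedean identity of \cite{dhrv} followed from the Kronecker limit formula. In the real dihedral case the identification of $\Reg_{\Z_p}(u_\Stark)$ with the $p$-adic logarithm of a Gross--Stark unit is essentially the content of the (now established) Gross--Stark conjecture, so the $p$-adic statement may in fact be \emph{more} accessible here than its archimedean counterpart, which in \cite{dhrv} was proved only for a restricted class of real dihedral forms.

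The main obstacle I anticipate is the compatibility of the projective limit with the $p$-adic logarithm, namely showing that $\mfs_{\Z_p} = \varprojlim_n \mfs^{(n)}$, after pairing with $f \cdot \wh f^*$, recovers the full $p$-adic regulator and not merely its reduction modulo $p$ (which would only reprove Conjecture \ref{conj:hv}). This requires a careful analysis of the Coleman map along the tower $X_1(p^n) \to X_0(p^n)$ and of how the denominators in a choice of $u \in \calu(\Ad(\rho_f)) \otimes \Q$ interact with the $\Z_p$-integral structure, and it is also where the ordinariness hypothesis is genuinely used. The exotic case lies beyond present techniques, since there is no known description of $\calu(\Ad(\rho_f))$ by units that can be fed into an explicit reciprocity law.
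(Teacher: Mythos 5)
You should first note that the statement you were asked to prove is a \emph{conjecture}: the paper does not prove Conjecture \ref{conj:p-adic-hv} (nor claim to). Sections \ref{sec:p-adic-forms} and \ref{sec:p-adic-Shimura} only construct the objects ($\wh{f}^*$, $\mfs_{\Z_p}$, $\norm{f}_{\Z_p}^2$, $T_{\Z_p, N}$) needed to \emph{state} it, and the ``equivalence'' of the two displayed formulations is built into those definitions via the pairing identity $\mfs_{\Z_p}(f_1\wh{f}_2)=\pair{f_2,T_{\Z_p,N}(f_1)}_\SD$, not something requiring argument. So there is no proof in the paper against which your attempt can be matched, and your text, as written, is a research program rather than a proof: it would not establish the statement even if every heuristic in it were granted.

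Beyond that, the program has genuine gaps. The central step --- that the limit of finite-level pairings against the classes in the system defining $\mfs_{\Z_p}$ can be identified with a Perrin--Riou/Coleman big logarithm of a norm-compatible system of elliptic or Gross--Stark units --- is exactly the hard content a proof would need, and nothing in your outline supplies it; invoking Rubin's reciprocity law and Katz $L$-function factorizations names tools without showing they apply to this coherent-cohomology pairing. Moreover, your assertion that each class in the tower ``arises, via the Shimura covering $\pi_n: X_1(p^n)\to X_0(p^n)$ and Kummer theory, from an explicit modular unit'' is suspect for $n>1$: the paper stresses that these covers are \emph{not} \'etale for $n>1$ (only finite flat), so the Kummer-theoretic/modular-unit description of the Shimura class available at $n=1$ (Mazur, Merel) does not transfer directly; dealing with the flat, non-\'etale part is precisely the new difficulty of the $\Z_p$-direction as opposed to the $(\Z/p\Z)^\times$-direction. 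Finally, the conjecture is stated for an arbitrary newform $f$ of weight $1$, so restricting to dihedral $f$ and ``leaving the exotic case open'' would at best yield partial evidence, parallel to the current status of Conjecture \ref{conj:hv}, not a proof of the statement.
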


In the sense that the original conjecture of Harris--Venkatesh \cite{hv}
is over $(\Z/p\Z)^\times$ and that there are isomorphisms
$\Z_p^\times \cong \mu_{p-1} \times (1 + p\Z_p)
\cong (\Z/p\Z)^\times \times \Z_p$,
Conjecture \ref{conj:p-adic-hv} is a complement to the
Harris--Venkatesh conjecture.

\begin{rem}
  There is a unique element $u_f \in \calu(\Ad(\rho_f)) \otimes \Q$
  associated to $f$ by \cite[Conjecture 1]{zhang-hvs}.
  This $u_f$ is a multiple of the Stark unit $u_\stark$ from the Stark conjecture
  for $\Ad(\rho_f)$ by a Rankin--Selberg constant
  $c_{f, \rs}$ (cf. \cite[Theorem 3]{zhang-hvs}, \cite{zhang-rs}).
  It would be nice to specify how the $u$
  in the $p$-adic conjecture \ref{conj:p-adic-hv}
  is related to $u_f$,
  as was done for the mod-$p$ setting in
  \cite{zhang-hvs}.
  The author would like to speculate that
  perhaps in this way,
  one could draw a precise local--global picture
  for Stark units via the derived Hecke algebra.
  Furthermore, the author hopes to relate the new
  $p$-adic conjecture
  to special values of $p$-adic $L$-functions;
  this would be an analogue to the Gross--Stark conjecture
  with units instead of $p$-units,
  perhaps answering the speculations of
  Rivero \cite[Sections 1.3 and 6]{rivero}.
\end{rem}

\subsection*{Acknowledgements}

The author is grateful to Kenichi Namikawa
and Keiichi Gunji
for the opportunity to present this work at the
``Arithmetic aspects of automorphic forms and automorphic representations''
conference at the
Research Institute for Mathematical Sciences, Kyoto University (RIMS)
and for the inclusion of this report in its proceedings.
The author thanks
Michael Harris, Lo\"{i}c Merel,
Gyujin Oh, and Akshay Venkatesh for
helpful discussions on this topic.

This material is based on work
supported by the National Science Foundation
under Grant No. DMS-2303280.


\section{The Stark unit group}
\label{sec:stark-unit-group}

The ``Stark unit group'' defined in this section
follows the treatment and notation of
\cite{zhang-hvs}, which itself is adapted
from \cite{tate-stark,hv,dhrv}.
The elements of this group
appear in the Stark conjecture,
Conjecture \ref{conj:hv}, and
Conjecture \ref{conj:p-adic-hv}.

For a modular form $f$ of weight $1$ and
level $\Gamma_1(N)$,
there is a finite Galois extension $E/\Q$
and an associated Artin representation
$\rho_f: \Gal(E/\Q) \lra \GL(V)$
realized on a free module $V$ of rank $2$ over $\Z[f]$
by Deligne--Serre \cite{deligne-serre}.
From $\rho_f$, one obtains a $3$-dimensional complex representation
$\Ad(\rho_f)$ from the action of $\Gal(E/\Q)$
by conjugation through $\rho_f$ on trace-free $2 \times 2$ complex matrices.
For $\sigma \in \Gal(E/\Q)$,
let $x_\sigma := 2 \rho_f\paren{\sigma} -
\Tr\paren{\rho_f\paren{\sigma}} \cdot \Id_{2 \times 2}$ denote
the trace-zero $2 \times 2$ matrix obtained from $\rho_f(\sigma)$.

The (trace-free) adjoint representation $\Ad(\rho_f)$
has an integral model given by
$M := \Z[f] \cdot \set{x_\sigma \Mid \sigma \in \Gal(E/\Q)}$.
There is a (dual) space of units
\[
  \calu\big(\Ad(\rho_f)\big) := \Hom_{\Gal(E/\Q)}
    \paren{M, \calo_E^\times \otimes \Z[f]},
\]
which is called the ``Stark unit group'' in \cite{hv}
because the Stark conjectures \cite{stark-1971,stark-1975,stark-1976,stark-1980}
predict the existence of a Stark unit
$u_\Stark \in \calu(\Ad(\rho_f)) \otimes \Q$
that is unique up to multiplication by roots of unity
and whose Stark regulator $\Reg_\R(u_\Stark)$
gives the leading term of the Artin $L$-function $L(\Ad(\rho_f), s)$
at $s = 0$.
As detailed in \cite[Lemma 2.1]{hv} and \cite[Corollary 2.6]{horawa},
the space $\calu(\Ad(\rho_f))$ is a rank-$1$ $\Z[f, \frac{1}{6N}]$-module
that does not depend on the choice of $E$.

For each place $w$ of $E$, there is
\begin{itemize}
  \item an embedding $\iota_w: E \hookrightarrow E_w$
    where $E_w$ is $\R$, $\C$, or a $p$-adic field;
  \item a Frobenius element $\Frob_w$ given by the $p$-power
    map or complex conjugation; and
  \item a distinguished element
    $x_{\Frob_w} \in M$
    that is invariant under the action of the decomposition group
    $\abrac{\Frob_w} \subset \Gal(E/\Q)$.
\end{itemize}
The Stark regulator map $\Reg_\R$ can be defined by
\begin{enumerate}
  \item picking an archimedean place $w$ of $E$,
  \item taking the composition of the evaluation-at-$x_{\Frob_w}$ map,
    $\iota_w$,
    and the logarithm map:
    \[
      \begin{tikzcd}
        \Reg_{\R}:
          \calu\big(\Ad(\rho_f)\big)
            \arrow[r, "\substack{\text{evaluation} \\ \text{at } x_{\Frob_w}}"] &
          \paren{\calo_E^\times}^{\Frob_w} \otimes \Z[f]
            \arrow[r, "\log \circ \iota_w"] &
          \R \otimes \Z[f].
      \end{tikzcd}
    \]
\end{enumerate}

\noindent
For each prime $p$,
there is a $(\Z/p\Z)^\times$ regulator map
(called ``reduction of a Stark unit'' in \cite{hv, dhrv})
defined by
\begin{enumerate}
  \item picking a non-archimedean place $w$ of $E$ above $p$,
  \item taking the composition of the evaluation-at-$x_{\Frob_w}$ map,
    $\iota_w$,
    and reduction modulo the ideal of $w$:
    \[
      \begin{tikzcd}
        \Reg_{(\Z/p\Z)^\times}:
          \calu\big(\Ad(\rho_f)\big)
            \arrow[r, "\substack{\text{evaluation} \\ \text{at } x_{\Frob_w}}"] &
          \paren{\calo_E^\times}^{\Frob_w} \otimes \Z[f]
            \arrow[r, "\iota_w"] &
          \calo_{E_w}^\times \otimes \Z[f]
            \arrow[r, "\substack{\text{reduction} \\ \text{mod } w}"] &
          (\Z/p\Z)^\times \otimes \Z[f].
      \end{tikzcd}
    \]
\end{enumerate}
Harris and Venkatesh \cite[Section 2.8]{hv} observed that
$\Reg_{(\Z/p\Z)^\times}$
is independent of the choice of $w$ over $p$:
the vector $x_{\Frob_w} \in M$
is invariant under the decomposition group $\abrac{\Frob_w}$
and the maps used in defining $\Reg_{(\Z/p\Z)^\times}$
are $\Gal(E/\Q)$-equivariant, so changing $w$
corresponds to a Galois conjugation
which leaves the regulator map invariant.

By omitting the ``reduction modulo $w$'' step
in the construction of $\Reg_{{\Z/p\Z}^\times}$,
we can define two $p$-adic regulators
(with an additional application of the $p$-adic logarithm function):
$\Reg_{\Z_p}$ and $\Reg_{\Z_p^\times}$.


\section{The modulo-\texorpdfstring{$p$}{p} Shimura class and derived Hecke operator}
\label{sec:mod-p}

This section is a condensed overview of the remaining ingredients
(Shimura class, norm, and derived Hecke operator)
necessary to state Conjecture \ref{conj:hv} following
the presentation in \cite{zhang-hv,zhang-hvs}.

\subsection{The modulo-\texorpdfstring{$p$}{p} Shimura class}
Let $p$ be a prime not dividing $6N$.
The Shimura covering
is a finite \'{e}tale Galois covering of modular curves
(more precisely, the maximal \'{e}tale intermediate extension)
with deck group $(\Z/p\Z)^\times$:
\[
  \begin{tikzcd}[row sep = large]
    X_1(p) \arrow[d, "(\Z/p\Z)^\times"] \\
    X_0(p)
  \end{tikzcd}
\]
which induces an element
$\mfs_{(\Z/p\Z)^\times, \et} \in H^1_\et\paren{X_0(p), (\Z/p\Z)^\times}$.
After taking
\begin{enumerate}[(a)]
  \item the base change
  $X_0(p)_{\Z/(p-1)\Z} := X_0(p) \otimes \spec(\Z/(p-1)\Z)$,
  \item the pushforward of the \'{e}tale sheaf
  $\Z/(p-1)\Z \lra \calo_{X_0(p)_{\Z/(p-1)\Z}}$,
  \item the comparison of Zariski and \'{e}tale cohomology
  for quasi-coherent sheaves,
  and
  \item the pairing given by Serre duality,
    \[
      \begin{tikzcd}[row sep = large]
        \paren{H^1\paren{X_0(p)_{\Z/(p-1)\Z}, \calo} \otimes (\Z/p\Z)^\times}
        \otimes
        \paren{H^0\paren{X_0(p)_{\Z/(p-1)\Z}, \Omega^1} \otimes (\Z/p\Z)^\times}
          \arrow[d, "\abrac{\,\cdot\,, \,\cdot\,}_{\SD}"] \\
        (\Z/p\Z)^\times,
      \end{tikzcd}
    \]
\end{enumerate}
we have a composition of the respective maps:
\[
  \begin{tikzcd}[column sep = tiny]
    \mfs_{(\Z/p\Z)^\times, \et} \arrow[ddd, mapsto] & \in &
      H^1_\et\paren{X_0(p), (\Z/p\Z)^\times} \arrow[d, "(a)"] \\
    & &
      H^1_\et\paren{X_0(p), \Z/(p-1)\Z} \otimes_\Z (\Z/p\Z)^\times \arrow[d, "(b)"] \\
    & &
      H^1_\et\paren{X_0(p)_{\Z/(p-1)\Z}, \G_a} \otimes (\Z/p\Z)^\times
        \arrow[d, "(c)"] \\
    \mfs_{(\Z/p\Z)^\times} \arrow[d, mapsto] & \in &
      H^1_\Zar\paren{X_0(p)_{\Z/(p-1)\Z}, \calo} \otimes (\Z/p\Z)^\times
        \arrow[d, "(d)"] \\
    \abrac{\, \cdot \,, \mfs_{(\Z/p\Z)^\times}}_\SD & \in &
      \Hom\paren{H^0\paren{X_0(p), \Omega^1}, (\Z/p\Z)^\times}.
  \end{tikzcd}
\]
Hence, the image of $\mfs_{(\Z/p\Z)^\times, \et}$
under the first four maps furnishes an element
$\mfs_{(\Z/p\Z)^\times} \in
H^1\paren{X_0(p)_{\Z/(p-1)\Z}, \calo} \otimes (\Z/p\Z)^\times$.
Since there is an isomorphism
$S_2(\Gamma_0(p)) \cong H^0\paren{X_0(p), \Omega^1}$,
(see \cite[Corollary 2.17]{shimura}),
we can view $\mfs_{(\Z/p\Z)^\times}$ as acting on
weight-$2$ cusp forms via
$\langle \, \cdot \,, \mfs_{(\Z/p\Z)^\times}\rangle_\SD \in
\Hom\paren{S_2(\Gamma_0(p)), (\Z/p\Z)^\times}$.

\subsection{The modulo-\texorpdfstring{$p$}{p} norm}

For a cusp form $f = \sum_n a_n q^n$ of weight $1$ and level $\Gamma_1(N)$
with coefficients in $\Q(f) \subset \C$,
applying complex conjugation to the Fourier coefficients
gives the dual cusp form $f^* = \sum_n \overline{a_n} q^n$ of weight $1$
and level $\Gamma_1(N)$.
Define $\Gamma_{0, 1}(p, N) := \Gamma_0(p) \cap \Gamma_1(N)$
and denote its modular curve by $X_{0, 1}(p, N) = X_{\Gamma_{0, 1}(p, N)}$.

For a prime $p$ not dividing $6N$,
the product $f(z)f^*(pz)$ is a cusp form of weight $2$ and level
$\Gamma_0(p) \cap \Gamma_1(N)$ whose trace
$\Tr_{\Gamma_0(p)}^{\Gamma_{0, 1}(p, N)} \paren{f(z)f^*(pz)}$
is a cusp form of weight $2$, level $\Gamma_0(p)$,
trivial nebentypus, and with coefficients in $\Z[f, \frac{1}{6N}]$.
\cite{zhang-hvs} defines a Harris--Venkatesh ``norm''
\[
  \norm{f}^2_{(\Z/p\Z)^\times} := \mfs_p \paren{\Tr_p^{Np}\paren{f(z)f^*(pz)}}
    \in (\Z/p\Z)^\times \otimes \Z\sbrac{f, \frac{1}{6N}}.
\]
This leads to the norm
formulation of Conjecture \ref{conj:hv}.

\subsection{The modulo-\texorpdfstring{$p$}{p} derived Hecke operator}
Harris and Venkatesh \cite{hv} define
a derived Hecke operator $T_{(\Z/p\Z)^\times, N}$ on the space
of cusp forms of weight $1$ and level $N$ coprime to $p$
by adding $\cup \mfs_p$ to the definition of the usual Hecke operator
via the pullback and pushforward of two projection maps
$\pi, \pi': X_{\Gamma_1(N) \cap \Gamma_0(p)} \lra X_{\Gamma_1(N)}$:

\adjustbox{scale=1, center}{
  \begin{tikzcd}[row sep = large]
    \begin{array}{c}
      \displaystyle H^0\paren{X_1(N)_{\Z/(p-1)\Z}, \omega(-\cusp)} \\
      \otimes \\
      (\Z/p\Z)^\times
    \end{array}
      \arrow[d, "\pi^*"] \arrow[r, dashed, "T_{(\Z/p\Z)^\times, N}"] &
    \begin{array}{c}
      \displaystyle H^1\paren{X_1(N)_{\Z/(p-1)\Z}, \omega(-\cusp)} \\
      \otimes \\
      (\Z/p\Z)^\times
    \end{array} \\
    \begin{array}{c}
      \displaystyle H^0\paren{X_{0, 1}(p, N)_{\Z/(p-1)\Z}, \omega(-\cusp)} \\
      \otimes \\
      (\Z/p\Z)^\times
    \end{array}
      \arrow[r, "\cup \mfs_{(\Z/p\Z)^\times}"] &
    \begin{array}{c}
      \displaystyle H^1\paren{X_{0, 1}(p, N)_{\Z/(p-1)\Z}, \omega(-\cusp)} \\
      \otimes \\
      (\Z/p\Z)^\times
    \end{array}
      \arrow[u, "\pi'_{*}"]
  \end{tikzcd}
}

\noindent
The Harris--Venkatesh norm concretely encapsulates
the action of the derived Hecke operator:
\[
  \norm{f}^2_{(\Z/p\Z)^\times} = \abrac{f^*, T_{(\Z/p\Z)^\times, N}(f)}_\SD.
\]
This leads to the derived Hecke operator
formulation of Conjecture \ref{conj:hv}.


\section{The \texorpdfstring{$p$}{p}-adic system of weight one forms}
\label{sec:p-adic-forms}

In the modulo-$p$ setting, we applied the Shimura class
$\mfs_{(\Z/p\Z)^\times}$ to the weight-$2$ cusp form
$f(z)f^*(pz)$.
In this section, we construct the $p$-adic analogue
by looking at spaces generated by $f^*(p^n z)$.
Note that this is the only section of this note
in which $p$ is allowed to be $2$ or $3$.

\subsection{Non-vanishing and uniqueness}
Let $f = \sum_n a_n q^n$ be a newform for $\Gamma_1(N)$
of weight $1$ with central character $\omega$
and dual form $f^* = \sum_n \overline{a_n} q^n$.
Let $p$ be a prime that does not divide $N$.
For the field $\Q(f) = \Q(\chi_{\rho_f})$ of coefficients
of $f$ with ring of integers $\Z[f]$,
let $v$ be a place of $\Q(f)$ over $p$
and let $\calo_v$ be the completion of $\Z[f]$ at $v$.
Consider the intersection
$\Gamma_{0, 1}(p^n, N) := \Gamma_0(p^n) \cap \Gamma_1(N)$
and its modular curve $X_{0, 1}(p^n, N)$.

For each integer $n \geq 0$, define the modular form
$f^*_n(z) := f^*(p^n z) = \restr{f^*(z)}{1} \begin{psmallmatrix}p^n & \\ & 1\end{psmallmatrix}$.
The subspace of cusp forms of weight $1$ and level $p^n$
generated by $f^*$ over $\calo_v$ is given by
\[
  V_{f^*, n} := \sum_{i = 0}^n \calo_v f^*_i \subset H^0\paren{X_{0, 1}\paren{p^n, N}_{\calo_v}, \omega(-\cusp)},
\]
where $\cusp$ is the cuspidal divisor.
The spaces $\set{V_{f^*, n}}_{n \geq 0}$ form a projective system
under trace maps;
take the projective limit,
\[
  \wh{V}_{f^*} := \varprojlim_n V_{f^*, n}.
\]
The polynomial
$X^2 - a_p X +\omega(p) p$
(which should not be mistaken for the Hecke polynomial for weight $1$)
has a unique root $\alpha$ in $\calo_v^\times$ by Hensel's lemma;
it appears in calculations of the trace map between levels of
our projective system.

\begin{thm}
  \label{thm:V}
  The space $\wh{V}_{f^*}$ is non-zero
  only if $v$ is ordinary.
  In this case, it is one-dimensional and generated by an element $\wh{f^*} \in \wh{V}_{f^*}$
  whose image in $V_{f^*, n}$ is given by
  \[
    \wh{f^*}_n:= \alpha^{1 - n} f^*_n - \alpha^{-n} \omega(p) f^*_{n - 1}.
  \] 
\end{thm}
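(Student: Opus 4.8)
The plan is to reduce the statement to an explicit description of the trace maps $\Tr_n \colon V_{f^*,n+1}\to V_{f^*,n}$ in the generators $f^*_0,\dots,f^*_{n+1}$ of $V_{f^*,n+1}$ (which are $\calo_v$-linearly independent, since $f^*_i$ has $q$-expansion $q^{p^i}+\cdots$), and then to analyze the resulting inverse system by a short linear-algebra argument. First I would compute the trace on the ``old'' generators: for $n\ge 1$ the natural covering $X_{0,1}(p^{n+1},N)\to X_{0,1}(p^n,N)$ has degree $p$, and for $0\le i\le n$ the form $f^*_i$ at level $p^{n+1}$ is pulled back from level $p^n$, so by the projection formula $\Tr_n(f^*_i)=p\,f^*_i$. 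The one generator that genuinely requires level $p^{n+1}$ is $f^*_{n+1}=f^*(p^{n+1}z)$, and the crux of the proof is the identity
\[
  \Tr_n(f^*_{n+1}) = a_p\, f^*_n - \omega(p)\, f^*_{n-1} \qquad (n\ge 1),
\]
together with the analogous (degree $p+1$) computation at the bottom step $\Tr_0\colon V_{f^*,1}\to V_{f^*,0}$. I would prove this by taking coset representatives $\begin{psmallmatrix}1 & 0\\ p^n j & 1\end{psmallmatrix}$, $0\le j\le p-1$, for $\Gamma_{0,1}(p^{n+1},N)\backslash\Gamma_{0,1}(p^n,N)$, expanding $\sum_j f^*_{n+1}|_1\begin{psmallmatrix}1 & 0\\ p^n j & 1\end{psmallmatrix}$, reducing each slash to an upper-triangular double-coset representative by an auxiliary element of $\Gamma_1(N)$, and tracking the nebentypus $\omega$; alternatively one can read it off from the standard description of the $U_p$-operator on the old subspace, as it is exactly the weight-one Hecke relation $U_p f^* = a_p f^* - \omega(p)\, V_p f^*$ propagated up the tower. \textbf{This trace computation is the main obstacle.} The normalization and coset bookkeeping are delicate, and what really matters is that no term $f^*_j$ with $j<n-1$ survives: it is precisely this two-term shape — a shadow of the three-term weight-one recursion — that forces $\wh{f^*}_n$ to be supported on $f^*_n$ and $f^*_{n-1}$ alone.

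Granting the trace formula, the non-vanishing assertion in the ordinary case is immediate: using $\alpha^2-a_p\alpha+\omega(p)p=0$ one checks directly that $\Tr_n(\wh{f^*}_{n+1})=\wh{f^*}_n$ for every $n\ge 1$ (and likewise at $n=0$), so $\wh{f^*}:=(\wh{f^*}_n)_n$ is a genuine element of $\wh{V}_{f^*}$, nonzero because $\alpha^{1-n}f^*_n-\alpha^{-n}\omega(p)f^*_{n-1}$ is a nontrivial combination of the linearly independent forms $f^*_n,f^*_{n-1}$. For the remaining assertions, let $(x_n)_n\in\wh{V}_{f^*}$ and write $x_n=\sum_{i=0}^n c_{n,i}\,f^*_i$. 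Matching $f^*_i$-coefficients in $\Tr_n(x_{n+1})=x_n$ via the trace formula yields $c_{n,i}=p\,c_{n+1,i}$ for $i\le n-2$, $c_{n,n-1}=p\,c_{n+1,n-1}-\omega(p)\,c_{n+1,n+1}$, and $c_{n,n}=p\,c_{n+1,n}+a_p\,c_{n+1,n+1}$. Iterating the first relation gives $c_{n,i}=p^m c_{n+m,i}$ for all $m\ge 0$, hence $c_{n,i}=0$ for $i\le n-2$, so that $x_n=c_{n,n}f^*_n+c_{n,n-1}f^*_{n-1}$. Setting $A_n:=c_{n,n}$, the two surviving relations collapse to $c_{n,n-1}=-\omega(p)A_{n+1}$ and the three-term recursion $A_n=a_p A_{n+1}-\omega(p)p\,A_{n+2}$, whose characteristic polynomial is exactly $X^2-a_p X+\omega(p)p$.

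It remains to split on ordinarity. If $v$ is not ordinary, then $\ord_v(a_p)>0$, and from $A_n=a_pA_{n+1}-\omega(p)pA_{n+2}$ one gets $\ord_v(A_n)\ge\varepsilon+\min\!\big(\ord_v(A_{n+1}),\ord_v(A_{n+2})\big)$ with $\varepsilon:=\min(\ord_v(a_p),\ord_v(p))>0$; iterating this and using $\ord_v(A_m)\ge 0$ for all $m$ forces $\ord_v(A_n)=\infty$, i.e.\ $A_n=0$ for all $n$ (equivalently, the Newton polygon of $X^2-a_pX+\omega(p)p$ shows both roots have strictly positive valuation, so no nonzero $\calo_v$-valued solution of the recursion exists). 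Hence $x_n=0$ and $\wh{V}_{f^*}=0$. If $v$ is ordinary, the unit root $\alpha$ exists, the other root is $\beta=\omega(p)p/\alpha=a_p-\alpha\in\calo_v$ with $\ord_v(\beta)=\ord_v(p)>0$ and $\beta\ne\alpha$, and solving the recursion over $\Frac(\calo_v)$ gives $A_n=\lambda\alpha^{-n}+\mu\beta^{-n}$; since $\ord_v(\mu\beta^{-n})\to-\infty$ while $A_n\in\calo_v$, necessarily $\mu=0$, so $A_n=\lambda\alpha^{-n}$ with $\lambda=\alpha A_1\in\calo_v$. Then $x_n=\lambda\alpha^{-n}f^*_n-\lambda\omega(p)\alpha^{-n-1}f^*_{n-1}=(\lambda/\alpha)\,\wh{f^*}_n$, so $\wh{V}_{f^*}=\calo_v\cdot\wh{f^*}$ is free of rank one, as claimed.
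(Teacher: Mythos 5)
Your proposal is correct and follows essentially the same route as the paper: the same two-term trace formula $\Tr_{n+1}(f^*_{n+1})=a_p f^*_n-\omega(p)f^*_{n-1}$ proved by the same coset/$\rmt_p$ computation (though your representatives should be $\begin{psmallmatrix}1&0\\ p^n jN&1\end{psmallmatrix}$, with the factor $N$, so that they lie in $\Gamma_{0,1}(p^n,N)$), the unit root $\alpha$ from Hensel's lemma, and the verification $\Tr_{n+1}(\wh{f^*}_{n+1})=\wh{f^*}_n$ via $\alpha^2-a_p\alpha+\omega(p)p=0$. Your endgame is a bit more explicit than the paper's: you expand an arbitrary compatible sequence, derive and solve the recursion $A_n=a_pA_{n+1}-\omega(p)pA_{n+2}$, and use valuations to eliminate the $\beta$-component (resp.\ everything in the non-ordinary case), whereas the paper argues with the shrinking images $\bigcap_{m\geq n}\Im(\Tr_{m,n})$ and asserts one-dimensionality; the substance is the same.
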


\begin{rem}
  In the proof of Theorem \ref{thm:V}, we will only use the fact
  that $f^*$ has eigenvalue $a_p$ under $\rmt_p$.
  Thus, Theorem \ref{thm:V} actually holds for any old form
  $\varphi$ generated by $f^*$ such that the level of $\varphi$
  is coprime to $p$. Also, the condition on ordinariness
  (i.e. $\ord_v(a_p) = 0$)
  is only relevant for $p = 2$ or $3$ since a
  newform of weight $1$ is ordinary at the other primes.
\end{rem}

The key to proving Theorem \ref{thm:V} is the calculation of the trace map
from level $n+1$ to level $n$.

\begin{lem}
  \label{lem:trace-fn+1}
  Consider the trace map
  \[
    \Tr_{n + 1}: V_{f^*, n + 1} \lra V_{f^*, n}, \qquad n \geq 1.
  \]
  \begin{enumerate}[(a)]
    \item Then for all $i \leq n$,
      \[
        \Tr_{n + 1}(f^*_i) = p f_i^*;
      \]
    \item and
      \[
        \Tr_{n + 1}(f^*_{n + 1}) = a_p f_n^* - \omega(p) f_{n - 1}^*.
      \]
  \end{enumerate}
\end{lem}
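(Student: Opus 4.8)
The plan is to compute each trace map $\Tr_{n+1}$ from an explicit set of coset representatives, dispose of (a) by an invariance argument, and reduce (b) to a computation with the $\rmt_p$-operator at level $\Gamma_1(N)$.

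First I fix representatives. For $n\geq 1$ the index $[\Gamma_{0,1}(p^n,N):\Gamma_{0,1}(p^{n+1},N)]$ is $p$, and by surjectivity of $\SL_2(\Z)\to\SL_2(\Z/p^{n+1}N\Z)$ one can choose $\gamma_0,\dots,\gamma_{p-1}\in\Gamma_{0,1}(p^n,N)$ with $\gamma_j\equiv\begin{psmallmatrix}1&0\\jp^n&1\end{psmallmatrix}\pmod{p^{n+1}}$ and $\gamma_j\equiv\begin{psmallmatrix}1&0\\0&1\end{psmallmatrix}\pmod N$; writing $\gamma_j=\begin{psmallmatrix}a_j&b_j\\p^nc_j'&d_j\end{psmallmatrix}$ one has $a_j,d_j\equiv 1$, $b_j,c_j'\equiv 0\pmod N$ and $c_j'\equiv j\pmod p$. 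The first place where $p\nmid N$ is used is the observation, needed throughout, that each $f^*_i=f^*|_1\begin{psmallmatrix}p^i&0\\0&1\end{psmallmatrix}$ has level $\Gamma_{0,1}(p^i,N)$: indeed $\begin{psmallmatrix}p^i&0\\0&1\end{psmallmatrix}\gamma\begin{psmallmatrix}p^i&0\\0&1\end{psmallmatrix}^{-1}\in\Gamma_1(N)$ for every $\gamma\in\Gamma_{0,1}(p^i,N)$, since conjugation only scales the off-diagonal entries by the unit $p^{\pm i}$ modulo $N$. In particular $f^*_i$ is fixed by $\Gamma_{0,1}(p^n,N)$ whenever $i\le n$, whence $\Tr_{n+1}(f^*_i)=\sum_{j=0}^{p-1}f^*_i|_1\gamma_j=p\,f^*_i$; this is part (a).

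For part (b) the form $f^*_{n+1}=f^*|_1\begin{psmallmatrix}p^{n+1}&0\\0&1\end{psmallmatrix}$ is not invariant, so $\Tr_{n+1}(f^*_{n+1})=\sum_{j}f^*|_1(\begin{psmallmatrix}p^{n+1}&0\\0&1\end{psmallmatrix}\gamma_j)$ must be computed directly, and the engine is the factorization
\[
  \begin{psmallmatrix}p^{n+1}&0\\0&1\end{psmallmatrix}\gamma_j \;=\; M_j\begin{psmallmatrix}p^{n}&0\\0&1\end{psmallmatrix},\qquad
  M_j:=\begin{psmallmatrix}pa_j&p^{n+1}b_j\\c_j'&d_j\end{psmallmatrix},\qquad \det M_j=p,\qquad M_j\equiv\begin{psmallmatrix}p&0\\0&1\end{psmallmatrix}\pmod N .
\]
This gives $\Tr_{n+1}(f^*_{n+1})=\Psi|_1\begin{psmallmatrix}p^n&0\\0&1\end{psmallmatrix}$ with $\Psi:=\sum_{j=0}^{p-1}f^*|_1M_j$, so it remains to evaluate $\Psi$. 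One splits the sum by whether $p\mid c_j'$. There is exactly one such index, $j=0$, and for it $M_j$ factors over $\Z$ as $\begin{psmallmatrix}p&0\\0&1\end{psmallmatrix}$ times an element of $\Gamma_{0,1}(p,N)$, so it contributes $f^*|_1\begin{psmallmatrix}p&0\\0&1\end{psmallmatrix}=f^*_1$; for each of the remaining $p-1$ indices $c_j'$ is a unit mod $p$ and $M_j$ factors as $\gamma_j'\begin{psmallmatrix}1&\ell_j\\0&p\end{psmallmatrix}$ with $\ell_j\equiv j^{-1}\pmod p$ and $\gamma_j'\in\Gamma_0(N)$ whose lower-right entry is $\equiv p^{-1}\pmod N$, so it contributes a fixed nebentypus factor (independent of $j$) times $f^*|_1\begin{psmallmatrix}1&\ell_j\\0&p\end{psmallmatrix}$. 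Since $j\mapsto\ell_j$ permutes $(\Z/p\Z)^\times$, collecting terms and using that $\sum_{\ell=0}^{p-1}f^*|_1\begin{psmallmatrix}1&\ell\\0&p\end{psmallmatrix}$ together with the diamond-twisted $\begin{psmallmatrix}p&0\\0&1\end{psmallmatrix}$-term computes $\rmt_pf^*=a_pf^*$ collapses $\Psi$ to $a_pf^*-\omega(p)\,f^*|_1\begin{psmallmatrix}1&0\\0&p\end{psmallmatrix}$; applying $|_1\begin{psmallmatrix}p^n&0\\0&1\end{psmallmatrix}$ and simplifying $\begin{psmallmatrix}1&0\\0&p\end{psmallmatrix}\begin{psmallmatrix}p^n&0\\0&1\end{psmallmatrix}$ (a scalar times $\begin{psmallmatrix}p^{n-1}&0\\0&1\end{psmallmatrix}$) then yields $\Tr_{n+1}(f^*_{n+1})=a_pf^*_n-\omega(p)f^*_{n-1}$.

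The real work is in part (b), and the delicate point is the bookkeeping of constants: one must use coset representatives that actually lie in $\Gamma_1(N)$ (not merely in $\Gamma_0(p^n)$), and then track exactly how the nebentypus/diamond factors and the weight-one normalization factors emerge from the two factorizations of $M_j$ --- it is the appearance of $\gamma_j'\in\Gamma_0(N)$ with nontrivial lower-right entry (legitimate only because $p\nmid N$) that produces the coefficient $\omega(p)$ and forces the cancellation replacing the apparent $f^*_{n+1}$-contribution by $f^*_{n-1}$. Getting all constants straight --- including the boundary value $n=1$ where $f^*_{n-1}=f^*$ --- is essentially the entire content of the lemma. As a sanity check, parts (a) and (b) together are exactly what make the element $\wh{f^*}_n=\alpha^{1-n}f^*_n-\alpha^{-n}\omega(p)f^*_{n-1}$ of Theorem~\ref{thm:V} compatible under the trace maps, which is what forces $\alpha^2-a_p\alpha+\omega(p)p=0$.
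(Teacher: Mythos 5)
Your proof is correct and follows essentially the same route as the paper: choose coset representatives for the index-$p$ inclusion $\Gamma_{0,1}(p^{n+1},N)\subset\Gamma_{0,1}(p^n,N)$, get (a) from invariance of $f^*_i$ (using $p\nmid N$), and for (b) factor $\begin{psmallmatrix}p^{n+1}&0\\0&1\end{psmallmatrix}\gamma_j$ as a determinant-$p$ matrix times $\begin{psmallmatrix}p^{n}&0\\0&1\end{psmallmatrix}$ and recognize the resulting sum as the $\rmt_p$-action minus the complementary coset, then use the eigenvalue relation. The only difference is cosmetic: the paper takes the explicit unipotent representatives $\begin{psmallmatrix}1&0\\p^{n}iN&1\end{psmallmatrix}$ and identifies the sum directly with a lower-triangular presentation of $\rmt_p$ (leaving the diamond/nebentypus factor implicit in its last line), whereas you build representatives by approximation and convert to the standard upper-triangular presentation while tracking the $\Gamma_0(N)$-twists explicitly, which is where your $\omega(p)$ (and the cancellation of the apparent $f^*_{n+1}$-term) comes from.
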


\begin{proof}
  (a): This is due to the fact that $[\Gamma_{0, 1}(p^n, N) : \Gamma_{0, 1}(p^{n + 1}, N)] = p$.

  (b): Since $\Gamma_{0, 1}(p^{n + 1}, N) \setminus \Gamma_{0, 1}(p^n, N)$
  is represented by elements
  $\gamma_i := \begin{psmallmatrix} 1 & 0 \\ p^n iN & 1 \end{psmallmatrix}$
  for $0 \leq i \leq p - 1$,
  \[
    \Tr_{n + 1}(f^*_{n + 1})
      := \sum_{\gamma \in \Gamma_{0, 1}(p^{n + 1}, N) \setminus \Gamma_{0, 1}(p^n, N)}
        \restr{f^*_{n + 1}}{1} \gamma
      = \sum_i \restr{f^*}{1} \paren{\begin{pmatrix} p^{n + 1} & \\ & 1 \end{pmatrix} \gamma_i}.
  \]
  Notice that
  \[
    \begin{pmatrix} p^{n + 1} & \\ & 1 \end{pmatrix} \gamma_i
      = \begin{pmatrix} p^{n + 1} & \\ p^n iN & 1 \end{pmatrix}
      = \begin{pmatrix} p & \\ iN & 1 \end{pmatrix}
        \begin{pmatrix} p^n & \\ & 1 \end{pmatrix},
  \]
  and the Hecke operator $\rmt_p$ is presented by the
  left $\Gamma_1(N)$-cosets represented by
  $\begin{pmatrix} p & \\ iN & 1 \end{pmatrix}$
  and $\begin{pmatrix} 1 & \\ & p \end{pmatrix}$. Then
  \begin{align*}
    \Tr_{n + 1}(f^*_{n + 1})
      &= \sum_i \restr{f^*}{1} \paren{\begin{pmatrix} p^{n + 1} & \\ & 1 \end{pmatrix} \gamma_i} \\
      &= \sum_i \restr{f^*}{1} \paren{\begin{pmatrix} p & \\ iN & 1 \end{pmatrix}
        \begin{pmatrix} p^n & \\ & 1 \end{pmatrix}} \\
      &= \restr{\paren{\rmt_p f^* - \restr{f^*}{1} \begin{pmatrix} 1 & \\ & p \end{pmatrix}}}{1}
        \begin{pmatrix} p^n & \\ & 1 \end{pmatrix} \\
      &= \rmt_p \restr{f^*}{1} \begin{pmatrix}p^n & \\ & 1 \end{pmatrix}
        - \restr{f^*}{1} \begin{pmatrix} p^n & \\ & p \end{pmatrix} \\
      &= a_p f^*_n - \omega(p) f^*_{n - 1}.
  \end{align*}
\end{proof}

\begin{proof}[Proof of Theorem \ref{thm:V}]
  For any $m \geq n$, let $\Tr_{m, n}: V_{f^*, m} \lra V_{f^*, n}$ denote the
  $(m-n)$-fold composition of trace maps
  \[
    \begin{tikzcd}[row sep=tiny]
      \Tr_{m, n}: \quad V_{f^*, m} \arrow[r, "\Tr_m"] &
      V_{f^*, m-1} \arrow[r, "\Tr_{m-1}"] &
      \ldots  \arrow[r, "\Tr_{n+1}"] &
      V_{f^*, n}.
    \end{tikzcd}
  \]
  Let $\wh{V}_{f^*, n}$ denote the intersection of the images
  $\Im(\Tr_{m, n})$ over all $m \geq n$. 
  Then the projective limit $\wh{V}_{f^*} \neq 0$ if and only if
  $\wh{V}_{{f^*}, n} \neq 0$ for some $n$.

  Let $\pi$ be a uniformizer of $\calo_v$. 
  Consider the case that $\ord_v (a_p) > 1$.
  Then by Lemma \ref{lem:trace-fn+1},
  $\Im(\Tr_{n + 2, n}) \subset \pi V_{{f^*}, n}$ for all $n \geq 1$.
  This shows that $\Im(\Tr_{n + 2k, n}) \subset \pi^k V_{{f^*}, n}$,
  so $\wh{V}_{{f^*}, n} := \bigcap_{m \geq n} \Im(\Tr_{m, n}) = 0$
  and $\wh{V}_{f^*} = 0$ too.

  Now consider the case $\ord_v(a_p) = 0$.
  Take the basis for $V_{{f^*}, n + 1}$ given by,
  \begin{align*}
    (\wh{{f^*}})_0 &:= {f_0^*}, \\
    (\wh{{f^*}})_i &:= \alpha^{1 - i} {f_i^*}
      - \alpha^{-i} \omega(p) {f_{i - 1}^*} & \text{for } 0 < i \leq n + 1.
  \end{align*}
  By Lemma \ref{lem:trace-fn+1}, $\Tr_{n + 1}({f_i^*}) = p {f_i^*}$ for $i < n + 1$
  and $\Tr_{n + 1}({f}_{n + 1}^*) = a_p {f}_n^* - \omega(p) {f}_{n - 1}^*$, so
  \begin{align*}
    \Tr_{n + 1}\paren{(\wh{{f^*}})_{n + 1}}
      &= \alpha^{1 - n - 1} \Tr_{n + 1} ({f_{n + 1}^*})
        - \alpha^{-n - 1} \omega(p) \Tr_{n + 1} ({f_n^*}) \\
      &= \alpha^{-n} \paren{a_p {f^*}_n-\omega(p){f_{n - 1}^*}}
        - \alpha^{-n - 1} \omega(p) p {f^*}_n \\
      &= \alpha^{-n - 1} \paren{\alpha a_p-\omega(p)p} {f_n^*}
        - \alpha^{-n} \omega(p){f^*}_{n - 1} \\
      &= \alpha^{-n - 1} \alpha^2 {f_n^*}-\alpha^{-n} \omega(p){f_{n - 1}^*} \\
      &= (\wh{{f^*}})_n.
  \end{align*}
  Then the sequence $\{(\wh{{f^*}})_n\}_{n \geq 0}$ defines
  an element $\wh{{f^*}} \in \wh{V}_{f^*}$ and
  each $\wh{V}_{{f^*}, n}$ is equal to $\calo_v \cdot (\wh{{f^*}})_n$.
\end{proof}

\subsection{An extended remark on ordinary primes}
The ordinariness of $a_p$ in Theorem \ref{thm:V} can be characterized as follows.
Recall that $f$ corresponds to a two-dimensional complex representation $\rho_f$
by Deligne--Serre with $a_p = \Tr(\rho_f(\Frob_p))$. In fact,
the $p$-th Fourier coefficient $a_p$ of $f$
is the sum of two roots of unity whose order divides $\# \Im(\rho_f)$:
\[
  a_p = \zeta_1 + \zeta_2 = \zeta_1 \cdot (1 - \xi),
\]
where $\xi = -\frac{\zeta_2}{\zeta_1}$.
It is known that nonzero $a_p$ is not invertible for some
$p$-adic norm on $\Q(f)$
if and only if the order of $\xi$ is a power of $p$.
This shows that $p$ is ordinary if $a_p \neq 0$ and $p \nmid 2 \#\Im(\rho_f)$.

When $\rho_f$ has dihedral image, more can be said about
when $a_p$ is non-zero based on the splitting of $p$ in $K$.
In the dihedral case, $\rho_f \cong \Ind_{G_K}^{G_\Q}(\chi)$
for a finite character $\chi$ of $\Gal(K^\ab/K)$
and a quadratic number field $K/\Q$.
Here, $L(f, s) = L(\chi, s)$ and 
\[
  f = \sum_{\mfa \in S_\chi} \chi(\mfa) q^{\rmn(\mfa)},
\]
where $\mfa$ runs through the set $S_\chi$
of ideals of $\calo_K$ coprime to the conductor of $\chi$.
Therefore,
\[
  a_n = \sum_{\mfa: \rmn(\mfa) = n} \chi(\mfa),
\]
so $a_p = 0$ when $p$ is inert in $K$
and $a_p = \chi(\mfp)+\chi(\bar\mfp)$
when $p \calo_K= \mfp + \overline{\mfp}$.


\section{The \texorpdfstring{$p$}{p}-adic Shimura class and derived Hecke operator}
\label{sec:p-adic-Shimura}

In this section, we define the $p$-adic Shimura classes
which play a central role in defining $p$-adic Hecke operators.
We then define the $p$-adic norm
and $p$-adic derived Hecke operators.
This completes the formulation of
Conjecture \ref{conj:p-adic-hv}.
$\Z_p$ is used throughout this section for notational convenience,
but one may also take $\Z_p \otimes \Z[f]$
(analogously to $(\Z/p\Z)^\times \otimes \Z[f])$.

\subsection{The \texorpdfstring{$\Z_p^\times$}{Zp-times} Shimura class}
Let $p$ be a prime.
Consider the tower of coverings of modular curves for $n \in \N$,
\[
  \pi_n: X_1 \paren{p^n} \lra X_0\paren{p^n}.
\]
These covers are not \'{e}tale when $n > 1$
even after inverting $6N$,
but are finite flat (cf. \cite[Proposition 11.6]{mazur} in the $n=1$ case)
and define elements of flat cohomology,
\[
  \mfs_{\paren{\Z/p^n\Z}^\times, \, \fl }
    \in H^1_\fl \paren{X_0\paren{p^n}, \paren{\paren{\Z/p^n\Z}^\times}_a}.
\]
Here, we denoted the image of $\paren{\Z/p^n\Z}^\times$
in $\G_a$ by the quotient sheaf
$\paren{\paren{\Z/p^n\Z}^\times}_a$.
In the \'{e}tale case,
an unramified cover of connected $X = X_1(p)$ over $Y = X_0(p)$
with Galois group $(\Z/p\Z)^\times$
gives a morphism from the pointed fundamental group
$\pi_0(X, x_0)$ to $(\Z/p\Z)^\times$, which gives
an element of $H^1(X, (\Z/p\Z)^\times) \cong
\Hom(\pi_1(X, x_0), (\Z/p\Z)^\times)$ since the Galois group is
abelian. Now with $X = X_1(p^n)$ over $Y = X_0(p^n)$ with structure
group $(\Z/p^n\Z)^\times$, the cover is not
\'{e}tale but it is finite flat. The fiber product
$X \times_Y X$ is a union of subvarieties isomorphic to
$X$ with a group action by some quotient of the
original group $(\Z/p^n\Z)^\times$.
Fix one copy of $X$ in $X \times_Y X$; the other copies of $X$
come from actions of the quotient group,
and $X \times_Y X$ is a group scheme over $X$.
This gives a cochain from $X \times_Y X$
to the quotient group and an element of
$H^1_\fl \paren{X_0\paren{p^n}, \paren{\paren{\Z/p^n\Z}^\times}_a}$.

The compatibility of $\set{\pi_n}_{n \geq 0}$
gives an element $\mfs_{\Z_p^\times, \, \fl}$
in the (partially) complete cohomology
\[
  \wh{H}^1_\fl \paren{X_0\paren{p^\infty}, \Z_p^\times}
    := \varinjlim_n \varprojlim_m H^1_\fl \paren{X_0\paren{p^n}, \paren{\paren{\Z/p^m\Z}^\times}_a}.
\]
One can then view the Shimura class
as an element of Zariski cohomology,
similar to what
was done for the $n=1$ \'{e}tale case
of Harris--Venkatesh \cite[Section 3.1]{hv}.

\subsection{The \texorpdfstring{$\Z_p$}{Zp} Shimura class}
We can obtain a $\Z_p$ Shimura class from this
$\Z_p^\times$ Shimura class.
Consider the natural decomposition
\[
  \Delta \times \Z_p \iso \Z_p^\times,
\]
where $\Delta$ is the torsion subgroup of $\Z_p^\times$.
In particular,
$\Delta$ is isomorphic to $\mu_{p-1} \cong (\Z/p\Z)^\times$ when $p$ is odd,
and to $\mu_2$ when $p=2$.
In the other component,
the map $\Z_p \lra \Z_p^\times$ is given by $x \mapsto (1 + p)^x$ when $p$ is odd
and $x \mapsto 5^x$ when $p = 2$.
Then we have a product
\[
  \mfs_{\Z_p^\times, \, \fl} = \mfs_{\Delta, \, \fl} \times \mfs_{\Z_p, \, \fl}.
\]

The element $\mfs_{\Delta, \, \fl} = \mfs_{\Delta, \, \et}
\in H^1_\et (X_0(p), (\Z/p\Z)^\times)$
gives the mod-$p$ Shimura class
previously studied by \cite{mazur,merel2,hv,marcil,dhrv,horawa,lecouturier-hv,zhang-hv,zhang-hvs,zhang-rs} among others.
Here, we focus on the $\Z_p$ Shimura class $\mfs_{\Z_p, \, \fl}$ lying in
the cohomology group
\[
  \wh{H}^1_\fl \paren{X_0\paren{p^\infty}, \Z_p}
    := \varinjlim_n \varprojlim_m H^1_\fl \paren{X_0\paren{p^n}, \Z/p^m\Z}.
\]
This group has a natural map to the completed cohomology of coherent sheaves:
\[
  \varinjlim_n \varprojlim_m H^1 \paren{X_0\paren{p^n}_{\Z/p^m\Z}, \calo}.
\]
Using Serre duality, 
\[
  H^1 \paren{X_0\paren{p^n}_{\Z/p^m\Z}, \calo}
    \cong \Hom\paren{H^0\paren{X_0\paren{p^n}, \Omega}, \Z/p^m\Z}.
\]
Thus for $\wh{H}^0 (X_0(p^\infty), \Omega) := \varinjlim_n H^0 (X_0(p^n), \Omega)$,
the Shimura class $\mfs_{\Z_p}$ can also be viewed as an element in its dual space
\begin{align*}
  \Hom_{\Z_p} \paren{\wh{H}^0 \paren{X_0\paren{p^\infty}, \Omega}, \Z_p},
\end{align*}
where trace maps define the projective system.

One can perform all of these Shimura class constructions with
additional level structure.
It will be useful in the $p$-adic consideration of the
Harris--Venkatesh conjecture in the following section
to use these constructions
for $\Gamma_{0, 1}(p^n, N) := \Gamma_0(p^n) \cap \Gamma_1(N)$
and its modular curve $X_{0, 1}(p^n, N)$
instead of $\Gamma_0(p^n)$ and $X_0(p^n)$.


\subsection{The \texorpdfstring{$p$}{p}-adic norm}

We apply Theorem \ref{thm:V} to the dual form $f^* = \sum_n \overline{a}_n q^n$ of $f$.
Assuming that $\ord_v(\overline{a}_p) = 0$, we have the element
$\wh{f}^* \in \wh{H}^0(X_0(p^\infty), \omega(-\cusp))$ and in particular
\[
  f \cdot \wh{f}^* \in \wh{H}^0\big(X_0(p^\infty), \Omega \big).
\]
Recall that $\calo_v$ is the completion of $\Z[f]$ at $v$;
using the $\Z_p$-Shimura class $\mfs_{\Z_p, \fl}
\in \Hom_{\Z_p}(\wh{H}^0 (X_0(p^\infty), \Omega), \Z_p)$
from Section \ref{sec:p-adic-Shimura},
we can define a $p$-adic period of $f$,
\[
  \norm{f}_{\Z_p}^2 := \mfs_{\Z_p} \big(f \cdot \wh{f}^*\big) \in \calo_v.
\]
Concretely, what we have done in Section \ref{sec:p-adic-Shimura} is
the construction of the class
$\mfs_{\Z/p^n\Z, \, \fl} \in H^1(X_{0, 1}(p^{n + 1}, N), \Z/p^n\Z)$
using the \'{e}tale cover $X_1(p^n) \lra X_0(p^n)$,
and then the demonstration that this class induces an element
$\mfs_{\Z/p^n\Z} \in \Hom(H^0(X_{0, 1}(p^{n + 1}, \Omega), \Z/p^n\Z)$
through which we obtain
\[
  \norm{f}_{\Z_p}^2
    = \varprojlim_n \mfs_{\Z/p^n\Z}
      \Big(\big(\alpha^{1-n} f^*_n(z) - \alpha^{-n} \omega(p) f^*_{n-1}(z)\big)
      \, f(z)dz\Big).
\]

With these constructions in place,
we are now ready to formulate
Conjecture \ref{conj:p-adic-hv} in terms of the $p$-adic norm:
we ask whether
\[
  \norm{f}_{\Z_p}^2 = \Reg_{\Z_p}(u)
\]
for some element $u \in \calu(\Ad(\rho_f)) \otimes \Q$
and all sufficiently large primes $p$.
In other words,
the $\Z_p$ Shimura class should act on
weight $1$ forms compatibly with
the $(\Z/p\Z)^\times$ Shimura class
in the Harris--Venkatesh conjecture,
and there is a combined equality
for all sufficiently large primes $p$:
\[
  \norm{f}_{\Z_p^*}^2 = \Reg_{\Z_p^*}(u).
\]

\subsection{The \texorpdfstring{$p$}{p}-adic derived Hecke operator}
Define the subspace of $p$-adic ordinary elements
in $H^i(X_1(N), \omega(-\cusp)) \otimes \Z_p$ to be
\[
  H^i(X_1(N), \omega(-\cusp))^\ord
    := \bigcap_{n \geq 0} \rmt_p^n \, H^i \big(X_1(N), \omega(-\cusp)\big) \otimes \Z_p.
\]
In this space, we have the action of the invertible operator $\rmt_p$ and $\pair{p}$.
There is a unique invertible operator $A$ on $H^i(X_1(N), \omega(-\cusp))^\ord$
satisfying the equation:
\[
  A^2 - \rmt_p A + \pair{p}p = 0.
\]
Such an $A$ can be either defined by the binomial formula,
\[
  A = \frac{\rmt_p}{2} \paren{1 + \sqrt{1 - \frac{4\pair{p}p}{\rmt_p^2}}}
    = \rmt_p-\sum_{n \geq 1} \frac {1 \cdot 3 \cdots (2n-3)}{2^{1-n} n!}
      \paren{\frac{\pair{p} p}{\rmt_p^2}}^n,
\]
or by the limit  $A = \lim_{n \rightarrow \infty}A_n$ with $A_n$ the invertible operator defined by 
\begin{align*}
  A_0 &= \rmt_p, \\
  A_n &= \rmt_p - \frac{\pair{p}p}{A_{n - 1}}, & n > 0.
\end{align*}

Serre duality gives a perfect pairing:
\[
  \pair{\, \cdot\, , \, \cdot \,}_\SD:
    H^0\big(X_1(N), \omega(-\cusp)\big)^\ord \otimes
    H^1\big(X_1(N), \omega(-\cusp)\big)^\ord
    \lra \Z_p.
\]
Using the operator $A$ and $f_n(z) := f(p^nz)$, we have a lifting
\[
  \begin{tikzcd}[row sep=tiny]
    H^0\big(X_1(N), \omega(-\cusp)\big)^\ord \arrow[r]
      & \varprojlim_n H^0\big(X_{0, 1}(p^n, N), \omega(-\cusp)\big) \\
    f \arrow[r, mapsto]
      & \wh{f}:= \varprojlim_n \paren{A^{1 - n} f_n - A^{-n}\pair{p}p f_{n - 1}}.
  \end{tikzcd}
\]
Then we have a derived Hecke operator
\[
  T_{\Z_p, N}: H^0\big(X_1(N), \omega(-\cusp)\big)^\ord
    \lra H^1\big(X_1(N), \omega(-\cusp)\big)^\ord
\]
defined by this lifting and $\cup \mfs_{\Z_p}$
such that for any $f_1, f_2\in H^0(X_1(N), \omega(-\cusp))^\ord$, 
\[
  \mfs_{\Z_p} \big(f_1 \wh{f}_2\big) = \pair{f_2, T_{\Z_p, N}(f_1)}_\SD.
\]
We need only consider new forms $f_1 = f, f_2 = f^*$;
in this case, the prediction that
\[
  \norm{f}_{\Z_p}^2 = \Reg_{\Z_p}(u),
\]
together with
the pairing identity above, yields
the derived Hecke operator formulation of
Conjecture \ref{conj:p-adic-hv} in terms of
$\pair{f^*, T_{\Z_p, N}(f)}_\SD = \Reg(u)$.


\bibliography{bibliography}{}
\bibliographystyle{alpha}

\end{document}